\title{Weyl symbols and boundedness of Toeplitz operators}
\author{Lewis A. Coburn \\\small Department of Mathematics \\\small SUNY at Buffalo
\\\small Buffalo \\\small NY 14260, USA\\\small lcoburn@buffalo.edu \and
Michael Hitrik \\\small Department of Mathematics \\\small University of California
\\\small Los Angeles \\\small CA 90095-1555, USA\\\small hitrik@math.ucla.edu \and
Johannes Sj\"ostrand \\\small IMB, Universit\'e de Bourgogne \\\small 9, Av. A. Savary, BP 47870
\\\small FR-21078 Dijon, France\\\small and UMR 5584 CNRS
\\\small johannes.sjostrand@u-bourgogne.fr \and
Francis White \\\small Department of Mathematics \\\small University of California
\\\small Los Angeles \\\small CA 90095-1555, USA\\\small fwhite@math.ucla.edu}
\date{}
\def\wrtext#1{\relax\ifmmode{\leavevmode\hbox{#1}}\else{#1}\fi}
\def\abs#1{\left|#1\right|}
\def\begeq{\begin{equation}}
\def\endeq{\end{equation}}
\def\iint{\int\hskip -2mm\int}
\def\part#1{\frac{\partial}{\partial #1}}
\def\norm#1{||\,#1\,||}
\newcommand{\real}{\mbox{\bf R}}
\newcommand{\comp}{\mbox{\bf C}}
\renewcommand{\exp}{\mbox{\rm exp\,}}
\newtheorem{dref}{Definition}[section]
\newtheorem{lemma}[dref]{Lemma}
\newtheorem{theo}[dref]{Theorem}
\newtheorem{prop}[dref]{Proposition}
\newenvironment{proof}{\vspace{.3cm}\noindent{{\em Proof:}}}{\hfill$\Box$}
\begin{document}

\maketitle

\vspace*{1cm}
\noindent
{\bf Abstract}: We study Toeplitz operators on the Bargmann space, with Toeplitz symbols that are exponentials of inhomogeneous quadratic polynomials. It is shown that the boundedness of such operators is implied by the boundedness of the corresponding Weyl symbols.

\vskip 2.5mm
\noindent {\bf Keywords and Phrases:} positive complex canonical transformation, strictly pluri\-sub\-harmonic quadratic form, Fourier integral operator in the complex domain, Toep\-litz operator.

\vskip 2mm
\noindent
{\bf Mathematics Subject Classifi\-ca\-tion 2000}: 32U05, 32W25, 35S30, 47B35

%

\section{Introduction and statement of results}
\setcounter{equation}{0}

In the recent work~\cite{CHiSj}, the authors have established some basic links between the theory of Toeplitz operators acting on exponentially weighted spaces of entire holomorphic functions and Fourier integral operators (FIOs) in the complex domain. The point of view of complex FIOs was used in~\cite{CHiSj} to show that the boundedness of a certain class of Toeplitz operators is implied by the boundedness of their Weyl symbols, in agreement with a general conjecture made in~\cite{BC94}. The purpose of this note is to obtain a slight, but perhaps natural, extension of this result, by taking a closer look at the arguments of~\cite{CHiSj}. In a special case, we show that the boundedness of the Weyl symbols is also a necessary condition for the boundedness of the corresponding Toeplitz operators. We shall now proceed to describe the assumptions and state the main results.

\medskip
\noindent
Let $\Phi_0$ be a strictly plurisubharmonic quadratic form on $\comp^n$ and let us set
\begeq
\label{eq1.1}
\Lambda_{\Phi_0} = \left\{\left(x,\frac{2}{i}\frac{\partial \Phi_0}{\partial x}(x)\right), \, x\in \comp^n\right\} \subset \comp^{2n}.
\endeq
The real $2n$-dimensional linear subspace $\Lambda_{\Phi_0}$ is I-Lagrangian and R-symplectic, in the sense that the restriction of the complex symplectic form on $\comp^{2n}$ to $\Lambda_{\Phi_0}$ is real and non-degenerate. In particular, $\Lambda_{\Phi_0}$ is maximally totally real.

\medskip
\noindent
Let us introduce the Bargmann space
\begeq
\label{eq1.2}
H_{\Phi_0}(\comp^n) = L^2(\comp^n, e^{-2\Phi_0} L(dx)) \cap {\rm Hol}(\comp^n),
\endeq
and the orthogonal projection
\begeq
\label{eq1.3}
\Pi_{\Phi_0}: L^2(\comp^n,e^{-2\Phi_0} L(dx)) \rightarrow H_{\Phi_0}(\comp^n).
\endeq
Here $L(dx)$ is the Lebesgue measure on $\comp^n$. In this note we shall be concerned with the boundedness properties of Toeplitz operators of the form
\begeq
\label{eq1.4}
{\rm Top}(e^Q) = \Pi_{\Phi_0} \circ e^{Q} \circ \Pi_{\Phi_0}: H_{\Phi_0}(\comp^n) \rightarrow H_{\Phi_0}(\comp^n),
\endeq
where $Q$ is an inhomogeneous quadratic polynomial on $\comp^n$ with complex coefficients. The following is the main result of this work.

\begin{theo}
\label{theo_main}
Let $\Phi_0$ be a strictly plurisubharmonic quadratic form on $\comp^n$ and let $Q$ be a quadratic polynomial on $\comp^n$ with the principal part $q$. Assume that
\begeq
\label{eq1.5}
{\rm Re}\, q(x) < \Phi_{\rm herm}(x) := (1/2)\left(\Phi_0(x) + \Phi_0(ix)\right),\quad x \neq 0
\endeq
and
\begeq
\label{eq1.6}
{\rm det}\, \partial_x \partial_{\overline{x}} \left(2\Phi_0 - q\right) \neq 0.
\endeq
Let $a\in C^{\infty}(\Lambda_{\Phi_0})$ be the Weyl symbol of the operator ${\rm Top}(e^{Q})$ and assume that
$a \in L^{\infty}(\Lambda_{\Phi_0})$. Then the Toeplitz operator
$$
{\rm Top}(e^{Q}): H_{\Phi_0}(\comp^n) \rightarrow H_{\Phi_0}(\comp^n)
$$
is bounded.
\end{theo}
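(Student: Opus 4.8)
The plan is to follow the complex Fourier integral operator (FIO) framework of~\cite{CHiSj} and to reduce the boundedness of ${\rm Top}(e^{Q})$ to that of a metaplectic FIO whose Weyl symbol is, up to a multiplicative constant, a Gaussian exponential of a quadratic form on $\Lambda_{\Phi_0}$. First I would make the operator well defined and compute its Weyl symbol. The kernel of ${\rm Top}(e^{Q}) = \Pi_{\Phi_0} e^{Q} \Pi_{\Phi_0}$ is built from the Bergman kernel of $\Pi_{\Phi_0}$ and the Gaussian $e^{Q}$, and hypothesis~(\ref{eq1.5})---that ${\rm Re}\, q$ is strictly dominated by the Hermitian part $\Phi_{\rm herm}$ of $\Phi_0$ away from the origin---is exactly what guarantees convergence of the Gaussian integrals defining ${\rm Top}(e^{Q})$ on the dense subspace of holomorphic polynomials, so that the operator is at least densely defined.

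Next, using the Bargmann-side relation between the Toeplitz (anti-Wick) symbol $e^{Q}$ and the Weyl symbol $a$---a Gaussian heat-type convolution whose invertibility on exponential-quadratic symbols is precisely ensured by the non-degeneracy condition~(\ref{eq1.6}), ${\rm det}\, \partial_x \partial_{\overline{x}}(2\Phi_0 - q) \neq 0$---I would compute $a$ explicitly. It should come out as $a = c\, e^{\ell}$, where $\ell$ is an inhomogeneous quadratic form on $\Lambda_{\Phi_0}$ and $c$ is a nonzero constant whose finiteness reflects~(\ref{eq1.6}). The crux is then to reinterpret the Weyl quantization of the Gaussian $a = c\, e^{\ell}$ as a complex FIO associated with a linear complex canonical transformation $\kappa$ generated by the quadratic phase attached to $2\Phi_0 - q$. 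Here~(\ref{eq1.6}) guarantees that this generating phase is non-degenerate, so that $\kappa$ and its quantization are well defined, while~(\ref{eq1.5}) makes $\kappa$ \emph{positive} in the sense of positive complex canonical transformations.

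Within this framework the key dictionary entry is that the hypothesis $a \in L^{\infty}(\Lambda_{\Phi_0})$ translates into the sign condition ${\rm Re}\, \ell \leq 0$ on $\Lambda_{\Phi_0}$, and this in turn says that the weight $\Phi_0$ does not increase under the action of $\kappa$, i.e.\ the image of the I-Lagrangian $\Lambda_{\Phi_0}$ stays on the correct side of $\Lambda_{\Phi_0}$. I would then invoke the boundedness theorem for complex FIOs attached to positive canonical transformations to conclude that the operator maps $L^{2}(\comp^{n}, e^{-2\Phi_0} L(dx))$ to itself boundedly, restricting to $H_{\Phi_0}(\comp^{n})$, which is the assertion. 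Thus the logical chain is: $a \in L^{\infty}$ $\Rightarrow$ ${\rm Re}\, \ell \leq 0$ $\Rightarrow$ $\kappa$ bounded-positive $\Rightarrow$ ${\rm Top}(e^{Q})$ bounded.

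The main obstacle I anticipate is the borderline case in which ${\rm Re}\, \ell$ vanishes on a nontrivial subspace, so that $a$ is bounded but does not decay. Away from this degeneracy one expects genuine decay of the symbol, giving boundedness (indeed compactness) rather cheaply; but on the null set of ${\rm Re}\, \ell$ the canonical transformation $\kappa$ is only marginally positive, the FIO kernel loses its uniform Gaussian damping in the glancing directions, and one must show that the mere $L^{\infty}$ bound on the symbol still controls the operator norm. I would handle this by decomposing the kernel along the directions where ${\rm Re}\, \ell = 0$ versus ${\rm Re}\, \ell < 0$---for instance via a wave-packet/FBI decomposition or a direct Schur-type estimate on the Gaussian kernel---checking that the degenerate block contributes a (metaplectically unitary) bounded piece and the transverse block a rapidly decaying one, so that the composition remains bounded on $H_{\Phi_0}(\comp^{n})$.
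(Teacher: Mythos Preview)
Your overall strategy---compute the Weyl symbol as a Gaussian, reinterpret the Weyl quantization as a complex FIO associated to a canonical transformation $\kappa$, and deduce boundedness from positivity of $\kappa$ relative to $\Lambda_{\Phi_0}$---is exactly the paper's route, and your reduction to Theorem~\ref{theo_sect2} is correct. The substantive difference is in how the borderline case is handled.

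The paper does \emph{not} attack the degenerate directions by a kernel decomposition or Schur estimate. Instead it factors the affine canonical transformation as $\kappa=\kappa_{\ell}\circ\kappa_F$, where $\kappa_F$ is the linear transformation coming from the quadratic part (already treated in~\cite{CHiSj}) and $\kappa_{\ell}$ is a phase-space translation. By~\cite{CHiSj}, $\kappa_F(\Lambda_{\Phi_0})=\Lambda_{\Phi}$ with $\Phi\le\Phi_0$; a short lemma shows that a translation sends $\Lambda_\Phi$ to $\Lambda_\Psi$ with $\Psi=\Phi+{\rm Im}\,m|_{\Lambda_\Phi}$ for an explicit linear $m$. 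The geometric heart is then an explicit description of the intersection $\Lambda_\Phi\cap\Lambda_{\Phi_0}$ (Proposition~\ref{prop_sect2}): it is the image under $1-\tfrac12\mathcal F$ of the null set of ${\rm Im}\,F|_{\Lambda_{\Phi_0}}$. The boundedness hypothesis $a\in L^\infty$ splits into ${\rm Im}\,F\ge0$ on $\Lambda_{\Phi_0}$ and ``${\rm Im}\,F(\rho)=0\Rightarrow{\rm Im}\,\ell(\rho)=0$''; the second condition forces $m$ to be real precisely along $\Lambda_\Phi\cap\Lambda_{\Phi_0}$, so $\Phi_0-\Psi$ vanishes on the projection $\pi_x L$ and, being quadratic with nonnegative principal part there, is bounded below. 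Boundedness of the FIO then follows from the standard $\Lambda_{\Phi_0}\to\Lambda_\Psi$ mapping result.

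This is cleaner than your proposed block decomposition: no wave-packet analysis or Schur bound is needed, and the ``marginally positive'' directions are absorbed into a purely algebraic check that a linear form is real on a specified subspace. Your sketch would likely work but reproves from scratch what the factorization and the description of $\Lambda_\Phi\cap\Lambda_{\Phi_0}$ give for free. The one idea you are missing explicitly is the factorization $\kappa=\kappa_\ell\circ\kappa_F$; once you have it, the inhomogeneous case is a one-page addendum to~\cite{CHiSj} rather than a new kernel estimate.
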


\medskip
\noindent
{\it Remark}. In the homogeneous case, when $Q$ is a quadratic form, Theorem \ref{theo_main} was established in~\cite{CHiSj}. In the general inhomogeneous case considered here, Theorem \ref{theo_main} provides further evidence for the conjecture of~\cite{BC94},~\cite{LC}, stating that a Toeplitz operator is bounded on $H_{\Phi_0}(\comp^n)$ if and only if the corresponding Weyl symbol is bounded on $\Lambda_{\Phi_0}$.

\medskip
\noindent
The plan of this note is as follows. In Section \ref{sect_two}, we carry out the principal step in the proof of Theorem \ref{theo_main} by characterizing boundedness properties of operators given as Weyl quantizations of symbols of the form $e^{iP(x,\xi)}$, where $P$ is a holomorphic inhomogeneous quadratic polynomial on $\comp^{2n}$. The homogeneous case was discussed in~\cite{CHiSj}, and the only additional idea required here consists of performing a factorization of a suitable complex affine canonical transformation associated to the Weyl quantization above. The proof of Theorem \ref{theo_main} is then completed in Section \ref{sect_three} by passing from the Toeplitz symbol to the Weyl one, along the lines of~\cite{CHiSj}. Section \ref{sect_four} is devoted to the discussion of an explicit family of metaplectic Toeplitz operators in a quadratic Bargmann space, where we also verify that the sufficient condition for the boundedness of the Toeplitz operator given in Theorem \ref{theo_main} is in fact necessary, in agreement with the conjecture of~\cite{BC94},~\cite{LC}.

\medskip
\noindent
{\bf Acknowledgments}. J.S. acknowledges the support from the 2018 Stefan Bergman award.

\section{From bounded Weyl symbols to bounded Weyl quantizations}
\label{sect_two}
\setcounter{equation}{0}
\noindent

\medskip
\noindent
Let $F(x,\xi)$ be a holomorphic quadratic form on $\comp^{2n}$, let $\ell(x,\xi)$ be a complex linear function on $\comp^{2n}$, and let us consider formally the Weyl quantization of a symbol of the form
\begeq
\label{eq2.2}
a(x,\xi) = \exp(i(F(x,\xi) + \ell(x,\xi))).
\endeq
We have
\begin{equation}
\label{eq2.3}
Au(x)= a^w(x,D_x)u(x) = \frac{1}{(2\pi )^n}\iint e^{i((x-y)\cdot \theta +F((x+y)/2,\theta) + \ell((x+y)/2,\theta) )}u(y)dy d\theta .
\end{equation}
Following~\cite{CHiSj}, we shall view $A$ as a Fourier integral operator in the complex domain.  The holomorphic quadratic polynomial
\begeq
\label{eq2.4}
\Phi(x,y,\theta) = (x-y)\cdot \theta +F((x+y)/2,\theta) + \ell((x+y)/2,\theta)
\endeq
is a non-degenerate phase function in the sense of H\"ormander and defines a canonical relation
\begeq
\label{eq2.5}
\kappa: (y,-\partial_y \Phi(x,y,\theta)) \mapsto (x,\partial_x \Phi(x,y,\theta)), \quad \partial_{\theta} \Phi(x,y,\theta) = 0.
\endeq
Writing $\eta = -\partial_y \Phi(x,y,\theta)$ and $\xi = \partial_x \Phi(x,y,\theta)$ we see that $\kappa$ is given by $(y,\eta) \mapsto (x,\xi)$, where
\begin{equation}
\label{eq2.6}
\begin{split}
x&=\frac{x+y}{2}-\frac{1}{2}F'_\xi \left(\frac{x+y}{2},\theta\right) - \frac{1}{2} \ell'_{\xi},\\
y&=\frac{x+y}{2}+\frac{1}{2}F'_\xi \left(\frac{x+y}{2},\theta\right) + \frac{1}{2} \ell'_{\xi},\\
\xi& =\theta +\frac{1}{2}F'_x \left(\frac{x+y}{2},\theta\right) + \frac{1}{2} \ell'_x,\\
\eta &=\theta -\frac{1}{2}F'_x \left(\frac{x+y}{2},\theta\right) - \frac{1}{2} \ell'_x.
\end{split}
\end{equation}
Here $\ell'_x$, $\ell'_{\xi} \in \comp^n$ are constant. The graph of $\kappa$ is parametrized by the midpoint coordinate
$$
\rho = \left(\frac{x+y}{2},\theta\right) \in \comp^{2n},
$$
and we may rewrite (\ref{eq2.6}) in the form
\begeq
\label{eq2.7}
\kappa: \rho + \frac{1}{2} H_{F + \ell}(\rho) \mapsto \rho - \frac{1}{2} H_{F + \ell}(\rho).
\endeq
Here $H_{F + \ell}(\rho) = (F'_{\xi}(\rho) + \ell'_{\xi}, -F'_x(\rho) - \ell'_x)$ is the Hamilton vector field of the holomorphic function $F + \ell$ at $\rho$. Recalling as in~\cite{CHiSj} that the Hamilton vector field of $F$ is given by $H_F(\rho) = {\cal F}\rho$, where
$$
{\cal F}= \begin{pmatrix}F''_{\xi x} &F''_{\xi \xi }\\
-F''_{xx} &-F''_{x\xi }\end{pmatrix}
$$
is the fundamental matrix of $F$, we see that (\ref{eq2.7}) takes the form
\begeq
\label{eq2.8}
\kappa: \left(1 + \frac{1}{2} {\cal F}\right)\rho + \frac{1}{2} H_{\ell} \mapsto \left(1 - \frac{1}{2} {\cal F}\right)\rho - \frac{1}{2} H_{\ell}.
\endeq
In what follows we shall assume that $\pm 2 \notin {\rm Spec}({\cal F})$, so that the canonical relation
\begeq
\label{eq2.9}
\kappa_F: \left(1 + \frac{1}{2} {\cal F}\right)\rho \mapsto \left(1 - \frac{1}{2} {\cal F}\right)\rho
\endeq
is a canonical transformation. We have
$$
\kappa = \exp\left(-\frac{1}{2} H_{\ell}\right) \circ \kappa_F \circ \exp\left(-\frac{1}{2} H_{\ell}\right).
$$
More explicitly, it follows from (\ref{eq2.8}) that $\kappa$ is a complex affine canonical transformation given by
\begeq
\label{eq2.10}
\kappa: \rho \mapsto \kappa_F(\rho) - \frac{1}{2}\kappa_F(H_{\ell}) - \frac{1}{2}H_{\ell}.
\endeq
In view of Jacobi's theorem, the right hand side of (\ref{eq2.10}) is given by
$$
\kappa_F(\rho) - \frac{1}{2} H_{\ell \circ \kappa_F^{-1} + \ell},
$$
and we conclude that the map $\kappa$ admits the following factorization
\begeq
\label{eq2.11}
\kappa = \kappa_{\ell} \circ \kappa_F,
\endeq
where $\kappa_F$ is given in (\ref{eq2.9}) and $\kappa_{\ell}$ is a complex phase space translation given by
\begeq
\label{eq2.12}
\kappa_{\ell}(\rho) = \rho - \frac{1}{2} H_{\ell \circ \kappa_F^{-1} + \ell}.
\endeq

\bigskip
\noindent
Let $\Phi_0$ be a strictly plurisubharmonic quadratic form on $\comp^n$ and let us recall the I-Lagrangian R-symplectic linear manifold
$\Lambda_{\Phi_0}$ defined in (\ref{eq1.1}). The following is the main result of this section.

\begin{theo}
\label{theo_sect2}
Let $F$ be a holomorphic quadratic form on $\comp^{2n}$ such that the fundamental matrix of $F$ does not have the eigenvalues $\pm 2$, and let $\ell$ be a complex linear form on $\comp^{2n}$. Let $\Phi_0$ be a strictly plurisubharmonic quadratic form on $\comp^n$. Let
$$
a(x,\xi) = \exp(i(F(x,\xi) + \ell(x,\xi)))
$$
and assume that $a \in L^{\infty}(\Lambda_{\Phi_0})$. Then the operator
$$
a^w(x,D_x): H_{\Phi_0}(\comp^n) \rightarrow H_{\Phi_0}(\comp^n)
$$
is bounded.
\end{theo}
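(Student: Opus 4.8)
The plan is to lift the factorization (\ref{eq2.11}), $\kappa = \kappa_{\ell} \circ \kappa_F$, established above to the level of operators, thereby reducing the inhomogeneous statement to the homogeneous case treated in \cite{CHiSj} together with the analysis of a single complex phase space translation. First I would record that, since $F$ is quadratic and $\ell$ is linear, the operator $A = a^w(x,D_x)$ of (\ref{eq2.3}) is a Fourier integral operator in the complex domain associated with the complex affine canonical transformation $\kappa$ of (\ref{eq2.10}). The hypothesis $\pm 2 \notin \Spec({\cal F})$ guarantees, via (\ref{eq2.9}), that $\kappa_F$ is a genuine canonical transformation, so that the factorization (\ref{eq2.11}) is available and both factors are invertible affine canonical maps.

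Next I would split $A$ accordingly. Writing $B = (\exp(iF))^w(x,D_x)$ for the homogeneous Weyl quantization attached to $\kappa_F$, and $U_{\ell}$ for the metaplectic FIO implementing the complex phase space translation $\kappa_{\ell}$ of (\ref{eq2.12}), the composition law for FIOs attached to affine canonical transformations yields $A = c_0\, U_{\ell} \circ B$ for some nonzero constant $c_0$; here the order is dictated by $U_{\ell} \leftrightarrow \kappa_{\ell}$, $B \leftrightarrow \kappa_F$, and $U_{\ell} B \leftrightarrow \kappa_{\ell}\circ\kappa_F = \kappa$. Since all the relevant kernels are Gaussian exponentials of quadratic forms, this identity can alternatively be checked by a direct, if tedious, Gaussian computation, of which I would carry out only enough to confirm that the two sides quantize the same canonical transformation and to pin down $c_0$.

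For the factor $B$ I would invoke the homogeneous result of \cite{CHiSj}: the operator $(\exp(iF))^w$ is bounded on $H_{\Phi_0}(\comp^n)$ as soon as $\exp(iF) \in L^{\infty}(\Lambda_{\Phi_0})$, that is, as soon as $\Im F|_{\Lambda_{\Phi_0}} \ge 0$. This condition I would extract from the standing hypothesis: on the real $2n$-dimensional space $\Lambda_{\Phi_0}$ the function $-\log|a| = \Im(F+\ell)|_{\Lambda_{\Phi_0}}$ is an inhomogeneous real quadratic polynomial, and an inhomogeneous quadratic polynomial on $\real^{2n}$ is bounded below only if its purely quadratic part is positive semidefinite; that quadratic part is precisely $\Im F|_{\Lambda_{\Phi_0}}$. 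Hence $a \in L^{\infty}(\Lambda_{\Phi_0})$ already forces $B$ to be bounded.

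It remains to treat the translation $U_{\ell}$, and this is where I expect the main difficulty. A complex phase space translation need not be bounded on $H_{\Phi_0}(\comp^n)$: translations along the totally real form $\Lambda_{\Phi_0}$ act boundedly (indeed unitarily), whereas translations in the transverse imaginary directions act essentially as multiplication by an exponential-linear weight and are unbounded in general. The boundedness of $U_{\ell}$ is therefore equivalent to a positivity/compatibility condition relating the translation vector of (\ref{eq2.12}) to $\Lambda_{\Phi_0}$, which in turn amounts to the boundedness on $\Lambda_{\Phi_0}$ of the corresponding linear-exponential factor. To obtain this I would return to $\Im(F+\ell)|_{\Lambda_{\Phi_0}} \ge -C$: once $\Im F|_{\Lambda_{\Phi_0}} \ge 0$ is known, boundedness below forces the linear part of $\Im(F+\ell)|_{\Lambda_{\Phi_0}}$ to vanish on $\ker(\Im F|_{\Lambda_{\Phi_0}})$, which is exactly the compatibility condition making $U_{\ell}$ bounded. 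Combining the two factors, $A = c_0\, U_{\ell} \circ B$ is then bounded. The delicate points to get right are the constant and half-density bookkeeping in the factorization $A = c_0\, U_{\ell} B$, and the verification that the single hypothesis $a \in L^{\infty}(\Lambda_{\Phi_0})$ simultaneously delivers the separate boundedness of $B$ and of $U_{\ell}$, the interaction taking place precisely on $\ker(\Im F|_{\Lambda_{\Phi_0}})$.
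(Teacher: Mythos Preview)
Your overall plan---factor $\kappa=\kappa_\ell\circ\kappa_F$, quote \cite{CHiSj} for the homogeneous piece, and then handle the translation---is close to the paper's, but the step where you claim that the compatibility condition ``$\Im\ell$ vanishes on $\ker(\Im F|_{\Lambda_{\Phi_0}})$'' makes $U_\ell$ bounded on $H_{\Phi_0}(\comp^n)$ is a genuine gap. That condition is \emph{not} equivalent to $U_\ell:H_{\Phi_0}\to H_{\Phi_0}$ being bounded. Boundedness of a phase space translation on $H_{\Phi_0}$ requires the translation vector to lie in $\Lambda_{\Phi_0}$, i.e.\ $\Im m|_{\Lambda_{\Phi_0}}\equiv 0$ for $m=-\tfrac12(\ell\circ\kappa_F^{-1}+\ell)$, and (\ref{eq2.16}) does not imply this. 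A clean counterexample: take $\Im F|_{\Lambda_{\Phi_0}}$ strictly positive definite, so $\ker(\Im F|_{\Lambda_{\Phi_0}})=\{0\}$ and (\ref{eq2.16}) holds for every $\ell$; for generic $\ell$ the linear form $m$ has $\Im m|_{\Lambda_{\Phi_0}}\not\equiv 0$ and $U_\ell$ is unbounded on $H_{\Phi_0}$, yet $A=c_0U_\ell B$ is bounded. The point is that $B$ does strictly better than $H_{\Phi_0}\to H_{\Phi_0}$: it lands in $H_\Phi$ with $\Phi_0-\Phi$ a strictly positive quadratic form, and this quadratic gain absorbs the linear weight shift produced by $U_\ell$. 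Your factorization cannot see this interaction if you insist on each factor being bounded on the fixed space $H_{\Phi_0}$.

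The paper's fix is precisely to track the weights rather than the operators. It uses the factorization only at the level of canonical maps to compute $\kappa(\Lambda_{\Phi_0})=\Lambda_\Psi$ with $\Psi(x)=\Phi(x)+\Im\bigl(m(x,\tfrac{2}{i}\partial_x\Phi(x))\bigr)$, where $\kappa_F(\Lambda_{\Phi_0})=\Lambda_\Phi$ and $\Phi\le\Phi_0$. It then identifies the clean intersection $\Lambda_\Phi\cap\Lambda_{\Phi_0}=L$ explicitly (Proposition~\ref{prop_sect2}) and shows, using (\ref{eq2.16}), that $m$ is real along $L$, hence $\Phi_0-\Psi$ vanishes on $\pi_xL$; combined with $\Phi_0-\Phi\asymp\mathrm{dist}(\cdot,\pi_xL)^2$ this gives $\Phi_0-\Psi$ bounded below. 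General FIO theory then yields $a^w:H_{\Phi_0}\to H_\Psi$ bounded, and the weight inequality gives $H_\Psi\hookrightarrow H_{\Phi_0}$. If you want to keep your operator factorization, the correct statement is $B:H_{\Phi_0}\to H_\Phi$ and $U_\ell:H_\Phi\to H_\Psi$, followed by the same weight comparison $\Psi\le\Phi_0+C$; the work is in that last inequality, not in bounding $U_\ell$ on $H_{\Phi_0}$.
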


\medskip
\noindent
When proving Theorem \ref{theo_sect2}, we shall rely on some results of~\cite{CHiSj}, and it will also be convenient to use the factorization (\ref{eq2.11}). Our starting point is the following observation.

\begin{lemma}
\label{lemma_sect2}
Let $m(x,\xi)$ be a complex linear form on $\comp^{2n}$ and let us consider the complex canonical transformation $\exp(H_m)(\rho) = \rho + H_m$, $\rho \in \comp^{2n}$, (a complex phase space translation). Let $\Phi$ be a strictly plurisubharmonic quadratic form on $\comp^n$. Then we have
$$
\exp(H_m)\left(\Lambda_{\Phi}\right) = \Lambda_{\Psi},
$$
where $\Psi$ is a strictly plurisubharmonic quadratic polynomial on $\comp^n$ given by
\begeq
\label{eq2.14}
\Psi(x) = \Phi(x) + {\rm Im}\, \left(m\left(x,\frac{2}{i}\frac{\partial \Phi}{\partial x}(x)\right)\right),\quad x \in \comp^n.
\endeq
\end{lemma}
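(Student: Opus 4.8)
The plan is to reduce the lemma to a short bookkeeping computation with the tautological one-form, avoiding any explicit diagonalization of $\Phi$. Since $m$ is linear, its Hamilton field $H_m = (m'_\xi, -m'_x)$ is a constant vector (here $m'_x, m'_\xi \in \comp^n$ denote the constant gradients of $m$), so $\exp(H_m)$ is the affine map $(x,\xi)\mapsto (x + m'_\xi, \xi - m'_x)$. In particular $\exp(H_m)(\Lambda_\Phi)$ is again the graph of a map over the base $\comp^n$, so the whole problem is to produce the generating function $\Psi$. The key tool I would use is the elementary identity that on $\Lambda_\Phi$, parametrized by $x \mapsto (x, \frac{2}{i}\partial_x\Phi(x))$, one has $d\Phi = -\Im(\xi\cdot dx)$; this follows in one line from $\Phi$ being real (so that $\partial_{\overline{x}}\Phi = \overline{\partial_x\Phi}$) together with $\partial_x\Phi = \frac{i}{2}\xi$. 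I would also record the converse, which is the same computation run backwards: if $\Lambda$ is a graph $\{(X,\zeta(X))\}$ on which the pullback of $-\Im(\zeta\cdot dX)$ equals $d\Psi$ for a real function $\Psi$, then necessarily $\zeta = \frac{2}{i}\partial_X\Psi$, i.e. $\Lambda = \Lambda_\Psi$.

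Armed with this, I would parametrize the translated manifold $\exp(H_m)(\Lambda_\Phi)$ by $x$, writing $X = x + m'_\xi$ and $\Xi = \frac{2}{i}\partial_x\Phi(x) - m'_x$. Since $m'_\xi$ is constant, $dX = dx$, and pulling back gives
$$ -\Im(\Xi\cdot dX) = -\Im\Big(\tfrac{2}{i}\partial_x\Phi(x)\cdot dx\Big) + \Im(m'_x\cdot dx) = d\Phi(x) + d\,\Im(m'_x\cdot x). $$
By the converse identity this exhibits $\exp(H_m)(\Lambda_\Phi)$ as $\Lambda_\Psi$, with $\Psi$ determined up to an additive constant by $\Psi(X) = \Phi(X - m'_\xi) + \Im\big(m'_x\cdot (X - m'_\xi)\big)$; the additive constant is irrelevant, since $\Lambda_\Psi$ depends only on $\partial_X\Psi$.

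It remains to match this expression with the stated formula, which is a change-of-base-point computation. Writing $\Phi(\cdot,\cdot)$ for the polarization of $\Phi$, I would expand $\Phi(X - m'_\xi) = \Phi(X) - 2\Phi(X, m'_\xi) + \Phi(m'_\xi)$ and use $2\Phi(X, m'_\xi) = 2\Re\big(\partial_X\Phi(X)\cdot m'_\xi\big) = -\Im\big(\frac{2}{i}\partial_X\Phi(X)\cdot m'_\xi\big)$. Recombining the two imaginary-part terms as $\Im\big(m'_x\cdot X + m'_\xi\cdot \frac{2}{i}\partial_X\Phi(X)\big) = \Im\big(m(X, \frac{2}{i}\partial_X\Phi(X))\big)$ and discarding the resulting constants yields exactly the claimed $\Psi$. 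Finally, strict plurisubharmonicity is immediate: the correction term $\Im(m(X, \frac{2}{i}\partial_X\Phi(X)))$ is $\real$-linear in $X$, so the quadratic part of $\Psi$ is $\Phi$ and the Levi form $\partial_X\partial_{\overline{X}}\Psi = \partial_X\partial_{\overline{X}}\Phi$ is unchanged, hence positive definite. I expect the only real source of error to be the real-versus-holomorphic derivative bookkeeping (the factors $\frac{2}{i}$, the conjugations, and the $\Im$), and in particular making sure that the change of base point from $x$ to $X = x + m'_\xi$ produces only the asserted linear term plus an inconsequential constant.
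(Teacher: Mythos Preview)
Your argument is correct and essentially carries out the ``straightforward computation'' that the paper explicitly mentions but chooses not to present. The paper instead takes a more roundabout route via a time-dependent Hamilton--Jacobi equation: it introduces the real eikonal problem $\partial_t\Psi - {\rm Im}\,m(x,\tfrac{2}{i}\partial_x\Psi)=0$ with $\Psi(\cdot,0)=\Phi$, argues geometrically that the associated Lagrangian $L_\Psi$ is swept out by the flow $\partial_t+\widehat{H_m}$, and then solves the Hamilton--Jacobi equation explicitly in real symplectic coordinates to recover~(\ref{eq2.14}) (again only up to an additive constant $C_t$, as you also find). Your approach replaces this machinery by the single identity $d\Phi = -{\rm Im}(\xi\cdot dx)$ on $\Lambda_\Phi$ together with its converse, which makes the computation shorter and keeps the bookkeeping entirely at the level of the canonical one-form. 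What the paper's approach buys is a template that would apply verbatim to more general (nonlinear) Hamiltonians $m$ and to genuine time evolution; what your approach buys is a self-contained two-line proof tailored to the case at hand, with the plurisubharmonicity of $\Psi$ falling out for free from the observation that the correction is $\real$-linear.
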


\begin{proof}
While (\ref{eq2.14}) can be established by a straightforward computation, here we would like to indicate a more general approach, illustrating the point of view of evolution equations associated to the operator $m^w(x,D)$. See also~\cite{Sj83},~\cite{HiPrVi}. Let us consider the real Hamilton-Jacobi equation
\begeq
\label{eq2.14.1}
\frac{\partial \Psi}{\partial t}(x,t) - {\rm Im}\, m\left(x,\frac{2}{i}\frac{\partial \Psi}{\partial x}(x,t)\right) = 0, \quad \Psi(x,0) = \Phi(x),
\endeq
for $x\in \comp^n$, $t\in \real$, $t\geq 0$. Associated to the function $\Psi(x,t)$ is the manifold
$$
L_{\Psi} = \left\{\left(t, \frac{\partial \Psi}{\partial t}, x, \frac{2}{i}\frac{\partial \Psi}{\partial x}\right)\right\} \subset \real^2_{t,\tau} \times \comp^{2n}_{x,\xi},
$$
which is Lagrangian with respect to the real symplectic form
\begeq
\label{eq2.14.2}
d\tau \wedge dt - {\rm Im}\, \sigma,
\endeq
where
$$
\sigma = \sum_{j=1}^n d\xi_j \wedge dx_j
$$
is the complex symplectic (2,0)--form on $\comp^{2n}_{x,\xi}$. The equation (\ref{eq2.14.1}) tells us that
$$
\left(\tau - {\rm Im}\, m\right)|_{L_{\Psi}} = 0,
$$
and therefore the Hamilton vector field of the function $\tau - {\rm Im}\, m$, computed with respect to the real symplectic form (\ref{eq2.14.2}), is tangent to $L_{\Psi}$. Using the general relation
$$
\widehat{H_m} = H_{-{\rm Im}\, m}^{-{\rm Im}\, \sigma},
$$
valid for any $m(x,\xi)$ holomorphic, where $\widehat{H_m} = H_m + \overline{H_m}$ is the real vector field naturally associated to the holomorphic vector field $H_m$, see~\cite{Sj82}, we conclude that the vector field
$$
\partial_t + H_{-{\rm Im}\, m}^{-{\rm Im}\, \sigma} = \partial_t + \widehat{H_m}
$$
is tangent to $L_{\Psi}$. Identifying $\widehat{H_m}$ and $H_m$, we get
$$
\Lambda_{\Psi(\cdot,t)} = \exp(t H_m)\left(\Lambda_{\Phi}\right).
$$
It is now easy to obtain (\ref{eq2.14}) and to this end, we claim that the unique solution of the equation (\ref{eq2.14.1}) is given by
\begeq
\label{eq2.14.3}
\Psi(x,t) = \Phi(x) + t {\rm Im}\, \left(m\left(x,\frac{2}{i}\frac{\partial \Phi}{\partial x}(x)\right)\right) + C_t,
\endeq
where $C_t$ depends on $t$ only. When verifying the claim, let us write $-{\rm Im}\, m = p$ and choose real linear coordinates on $\comp^n$ so that $(x,\frac{2}{i}\partial_x \Psi(x,t))$ corresponds to $(x,\partial_x \Psi(x,t))$ in the usual $\real^{2n}$--sense. Then (\ref{eq2.14.1}) becomes
\begeq
\label{eq2.14.4}
\frac{\partial \Psi}{\partial t}(x,t) + p\left(x,\frac{\partial \Psi}{\partial x}(x,t)\right) = 0, \quad \Psi(x,0) = \Phi(x).
\endeq
Here $p(x,\xi)$ is real linear on $\real^{2n}_x \times \real^{2n}_{\xi}$, $p(x,\xi) = p'_x \cdot x + p'_{\xi} \cdot \xi$ and $\Phi(x)$ is a real quadratic form on $\real^{2n}$,
$$
\Phi(x) = \frac{1}{2} A_0 x \cdot x, \quad x\in \real^{2n},
$$
with $A_0$ real symmetric. With
$$
\Psi(x,t) = \frac{1}{2} A_t x\cdot x + B_t\cdot x + C_t,
$$
the equation (\ref{eq2.14.4}) becomes
$$
\frac{1}{2} \partial_t A_t x\cdot x + \partial_t B_t \cdot x + \partial_t C_t + p'_x \cdot x + p'_{\xi} \cdot\left(A_t x + B_t\right) = 0,
$$
and we immediately get the unique solution
$$
\Psi(x,t) = \frac{1}{2} A_0 x \cdot x - t \left(p'_x \cdot x + p'_{\xi} \cdot A_0 x\right) + C_t = \Phi(x) - t p\left(x,\partial_x \Phi(x)\right) + C_t,
$$
where
$$
C_t = \frac{t^2}{2} p'_{\xi} \cdot \left(p'_x + A_0 p'_{\xi}\right)
$$
depends on $t$ only. This shows (\ref{eq2.14.3}) and completes the proof.
\end{proof}

\medskip
\noindent
{\it Remark}. Associated to the canonical transformation $\exp(H_m)$ is the Fourier integral operator $e^{-im^w(x,D)}$, and from~\cite{Sj96} we may recall the explicit description
$$
e^{-im^w(x,D)} = e^{-\frac{i}{2} m'_x\cdot x} \circ \tau_{m'_{\xi}} \circ e^{-\frac{i}{2} m'_x\cdot x},
$$
where $\tau_s$ is the operator of translation by $s\in \comp^n$, $(\tau_s u)(x) = u(x-s)$. We may then verify by an explicit computation that the operator $e^{-im^w(x,D)}$ is bounded,
$$
e^{-im^w(x,D)}: H_{\Phi}(\comp^n) \rightarrow H_{\Psi}(\comp^n),
$$
where $\Psi$ is given by (\ref{eq2.14}). Here the weighted spaces of holomorphic functions $H_{\Phi}(\comp^n)$, $H_{\Psi}(\comp^n)$ are defined analogously to (\ref{eq1.2}).

\bigskip
\noindent
Let $a$ be of the form (\ref{eq2.2}) and let us notice that $a\in L^{\infty}(\Lambda_{\Phi_0})$ precisely when
\begeq
\label{eq2.15}
{\rm Im}\, F|_{\Lambda_{\Phi_0}}\geq 0
\endeq
and
\begeq
\label{eq2.16}
\rho \in \Lambda_{\Phi_0},\,\, {\rm Im}\, F(\rho) = 0 \Longrightarrow {\rm Im}\, \ell (\rho) = 0.
\endeq
It follows from (\ref{eq2.15}) and Proposition B.1 in~\cite{CHiSj} that the canonical transformation $\kappa_F$ in (\ref{eq2.9}) is positive relative to $\Lambda_{\Phi_0}$, and applying Theorem 1.1 of~\cite{CHiSj}, we get
\begeq
\label{eq2.17}
\kappa_F(\Lambda_{\Phi_0}) = \Lambda_{\Phi},
\endeq
where $\Phi$ is a strictly plurisubharmonic quadratic form such that $\Phi \leq \Phi_0$. We need to obtain an explicit description of the (clean) intersection $\Lambda_{\Phi}\cap \Lambda_{\Phi_0}$.

\begin{prop}
\label{prop_sect2}
We have
\begeq
\label{eq2.18}
\Lambda_{\Phi} \cap \Lambda_{\Phi_0} = \left\{\left(1-\frac{1}{2} {\cal F}\right)\rho;\,\, \rho \in \Lambda_{\Phi_0},\,\, {\rm Im}\, F(\rho) = 0\right\} \subset \comp^{2n}.
\endeq
\end{prop}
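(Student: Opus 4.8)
The plan is to prove the asserted set equality by establishing the two inclusions separately, reducing both to the single geometric equivalence: for $\rho \in \Lambda_{\Phi_0}$, one has $\Im F(\rho) = 0$ if and only if $H_F(\rho) = {\cal F}\rho \in \Lambda_{\Phi_0}$. Granting this equivalence, the inclusions follow quickly. For ``$\supseteq$'', let $\rho \in \Lambda_{\Phi_0}$ satisfy $\Im F(\rho) = 0$; then ${\cal F}\rho \in \Lambda_{\Phi_0}$, so both $(1 \pm \frac{1}{2}{\cal F})\rho \in \Lambda_{\Phi_0}$ since $\Lambda_{\Phi_0}$ is a real-linear subspace. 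By the definition (\ref{eq2.9}) of $\kappa_F$ we have $(1 - \frac{1}{2}{\cal F})\rho = \kappa_F((1 + \frac{1}{2}{\cal F})\rho) \in \kappa_F(\Lambda_{\Phi_0}) = \Lambda_{\Phi}$ by (\ref{eq2.17}), while $(1 - \frac{1}{2}{\cal F})\rho \in \Lambda_{\Phi_0}$ directly; hence this point lies in $\Lambda_{\Phi} \cap \Lambda_{\Phi_0}$. For ``$\subseteq$'', take $\nu \in \Lambda_{\Phi} \cap \Lambda_{\Phi_0}$. Since $\nu \in \Lambda_{\Phi} = \kappa_F(\Lambda_{\Phi_0})$ and $1 + \frac{1}{2}{\cal F}$ is invertible (as $-2 \notin \Spec({\cal F})$), we may write $\nu = (1 - \frac{1}{2}{\cal F})\rho$ with $\mu := (1 + \frac{1}{2}{\cal F})\rho \in \Lambda_{\Phi_0}$. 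Both $\mu$ and $\nu$ lie in $\Lambda_{\Phi_0}$, so $\rho = \frac{1}{2}(\mu + \nu) \in \Lambda_{\Phi_0}$ and ${\cal F}\rho = \mu - \nu \in \Lambda_{\Phi_0}$; the equivalence then gives $\Im F(\rho) = 0$, exhibiting $\nu$ in the right-hand set.

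The heart of the matter, and what I expect to be the main obstacle, is the equivalence itself, which is where the I-Lagrangian R-symplectic geometry of $\Lambda_{\Phi_0}$ enters. Set $g := \Im F|_{\Lambda_{\Phi_0}}$, a real quadratic form on $\Lambda_{\Phi_0}$ with $g \geq 0$ by (\ref{eq2.15}); since $g$ is a nonnegative quadratic form, its critical set and its zero set coincide, so $g(\rho) = 0$ if and only if $dg(\rho) = 0$. Because $\Lambda_{\Phi_0}$ is maximally totally real we may decompose $H_F(\rho) = w_t + i u$ with $w_t, u \in \Lambda_{\Phi_0}$, and the claim $H_F(\rho) \in \Lambda_{\Phi_0}$ is exactly $u = 0$. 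For any tangent direction $v \in \Lambda_{\Phi_0}$, the defining relation $dF(\rho)(v) = \sigma(H_F(\rho), v)$ (up to the fixed sign convention) gives, upon taking imaginary parts, $dg(\rho)(v) = \Im \sigma(H_F(\rho), v)$. Here the I-Lagrangian condition $\Im \sigma|_{\Lambda_{\Phi_0}} = 0$ kills the tangential contribution $\Im \sigma(w_t, v)$, while $\comp$-bilinearity of $\sigma$ together with the R-symplectic condition (so that $\sigma(u,v) \in \real$) yields $\Im \sigma(iu, v) = \sigma(u, v)$. Thus $dg(\rho)(v) = \sigma(u, v)$ for all $v \in \Lambda_{\Phi_0}$, and since $\sigma|_{\Lambda_{\Phi_0}}$ is real and nondegenerate, the map $u \mapsto \sigma(u, \cdot)|_{\Lambda_{\Phi_0}}$ is injective. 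Hence $u = 0$ if and only if $dg(\rho) = 0$ if and only if $g(\rho) = 0$, which is precisely the desired equivalence.

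Everything else is routine: the invertibility of $1 \pm \frac{1}{2}{\cal F}$ is guaranteed by the spectral hypothesis $\pm 2 \notin \Spec({\cal F})$, and the decomposition $\comp^{2n} = \Lambda_{\Phi_0} \oplus i\Lambda_{\Phi_0}$ used above is the maximal total reality noted after (\ref{eq1.1}). I would also record that the resulting intersection is a linear subspace of $\comp^{2n}$ --- namely the image under $1 - \frac{1}{2}{\cal F}$ of the null space of $\Im F|_{\Lambda_{\Phi_0}}$ --- which makes the cleanness of the intersection transparent and is what is needed for the stationary phase arguments to follow.
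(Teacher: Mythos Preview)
Your proof is correct. The paper takes a different route: rather than proving the key equivalence (for $\rho \in \Lambda_{\Phi_0}$, $\Im F(\rho) = 0 \Leftrightarrow {\cal F}\rho \in \Lambda_{\Phi_0}$) intrinsically via the I-Lagrangian R-symplectic geometry as you do, it first straightens $\Lambda_{\Phi_0}$ to the real phase space by choosing a complex linear canonical transformation $\kappa_T$ with $\kappa_T(\real^{2n}) = \Lambda_{\Phi_0}$ and setting $G = F \circ \kappa_T$. The equivalence then becomes the statement that for a holomorphic quadratic form with $\Im G|_{\real^{2n}} \geq 0$ one has $(1 \pm \tfrac{1}{2}{\cal G})\rho \in \real^{2n}$ precisely when $\rho \in \real^{2n}$ and $\Im G(\rho) = 0$, which the paper asserts without further comment --- in real coordinates it reduces to the fact that a positive semidefinite real quadratic form vanishes exactly on the kernel of its matrix. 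Your argument is essentially the content of this reduction unwound: it makes explicit where each of the I-Lagrangian and R-symplectic hypotheses is used (killing $\Im\sigma(w_t,v)$ and giving nondegeneracy of $\sigma(u,\cdot)$, respectively), at the price of a short symplectic-linear-algebra computation, whereas the paper's approach is terser but relies on the existence of the straightening map $\kappa_T$.
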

\begin{proof}
It will be convenient to obtain a reduction to the case when $\Lambda_{\Phi_0}$ is replaced by the real phase space $\real^{2n}$. To this end, let $T: L^2(\real^n) \rightarrow H_{\Phi_0}(\comp^n)$ be a unitary metaplectic Fourier integral operator with the associated complex linear canonical transformation $\kappa_T$ such that
$$
\kappa_T(\real^{2n}) = \Lambda_{\Phi_0}.
$$
We have
$$
\Lambda_{\Phi} \cap \Lambda_{\Phi_0} = \kappa_F(\Lambda_{\Phi_0}) \cap \Lambda_{\Phi_0} = \kappa_T \left(\kappa_G(\real^{2n}) \cap \real^{2n}\right),
$$
where it follows from (\ref{eq2.9}) that
$$
\kappa_G: \left(1 + \frac{1}{2} {\cal G}\right)\rho \mapsto \left(1 - \frac{1}{2} {\cal G}\right)\rho,
$$
and ${\cal G}$ is the fundamental matrix of the quadratic form $G = F \circ \kappa_T$. We have ${\rm Im}\, G|_{{\bf R}^{2n}}\geq 0$ and therefore
$$
\left(1 \pm \frac{1}{2} {\cal G}\right)\rho \in \real^{2n}
$$
precisely when $\rho \in \real^{2n}$, ${\rm Im}\, G(\rho) = 0$. It follows that
$$
\kappa_G(\real^{2n}) \cap \real^{2n} = \left\{\left(1 - \frac{1}{2} {\cal G}\right)\rho; \, \rho \in \real^{2n},\,{\rm Im}\, G(\rho) = 0\right\},
$$
and we obtain (\ref{eq2.18}).
\end{proof}

\medskip
\noindent
In what follows we shall use the notation
\begeq
\label{eq2.19}
L = \left\{\left(1-\frac{1}{2} {\cal F}\right)\rho;\,\, \rho \in \Lambda_{\Phi_0},\,\, {\rm Im}\, F(\rho) = 0\right\}.
\endeq
Letting $\pi_x: \comp^{2n} \ni (x,\xi) \mapsto x\in \comp^n$ be the projection map, we notice that
\begeq
\label{eq2.20}
\left\{x\in \comp^n; \Phi_0(x) = \Phi(x)\right\} = \pi_x \left(\Lambda_{\Phi}\cap \Lambda_{\Phi_0}\right) = \pi_x L,
\endeq
and the quadratic form $\Phi_0 - \Phi\geq 0$ satisfies
\begeq
\label{eq2.21}
\Phi_0(x) - \Phi(x) \simeq {\rm dist}(x,\pi_x L)^2,\quad x\in \comp^n.
\endeq

\medskip
\noindent
We shall now consider the I-Lagrangian R-symplectic affine plane $\kappa(\Lambda_{\Phi_0})$, where $\kappa$ is given by (\ref{eq2.8}). It follows from (\ref{eq2.11}), (\ref{eq2.12}), Lemma \ref{lemma_sect2}, and (\ref{eq2.17}) that
\begeq
\label{eq2.22}
\kappa(\Lambda_{\Phi_0}) = \Lambda_{\Psi},
\endeq
where $\Psi$ is a strictly plurisubharmonic quadratic polynomial on $\comp^n$ given by
\begeq
\label{eq2.23}
\Psi(x) = \Phi(x) + {\rm Im}\,\left(m\left(x,\frac{2}{i}\frac{\partial \Phi}{\partial x}(x)\right)\right),
\endeq
where
\begeq
\label{eq2.24}
m = -\frac{1}{2}\left(\ell \circ \kappa_F^{-1} + \ell\right).
\endeq

\medskip
\noindent
We claim that the quadratic polynomial $\Phi_0 - \Psi$ vanishes along the real linear subspace $\pi_x L \subset \comp^n$ and to this end, it suffices to check that the linear form $m$ is real along $L \subset \Lambda_{\Phi}$. It follows from (\ref{eq2.9}), (\ref{eq2.16}), and (\ref{eq2.24}) that when $\rho \in \Lambda_{\Phi_0}$, ${\rm Im}\, F(\rho) = 0$, we have
$$
m\left(\left(1-\frac{1}{2}{\cal F}\right)\rho\right) = - \ell (\rho)
$$
is real. We have therefore verified the claim and using also (\ref{eq2.21}) and (\ref{eq2.23}) we conclude that the inhomogeneous quadratic polynomial $\Phi_0 - \Psi$ is bounded below on $\comp^n$. The general theory, see~\cite{Sj82},~\cite{CaGrHiSj}, together with (\ref{eq2.22}),  allows us to conclude that the operator
$$
a^w(x,D_x): H_{\Phi_0}(\comp^n) \rightarrow H_{\Psi}(\comp^n)
$$
is bounded, and this completes the proof of Theorem \ref{theo_sect2}.

\section{Toeplitz operators and proof of Theorem 1.1}
\label{sect_three}
\setcounter{equation}{0}
The purpose of this section is to apply the results of Section \ref{sect_two} to the study of boundedness properties of Toeplitz operators in the Bargmann space, establishing Theorem \ref{theo_main}.

\medskip
\noindent
Let $\Phi_0$ be a strictly plurisubharmonic quadratic form on $\comp^n$ and let $Q$ be a quadratic polynomial with complex coefficients on $\comp^n$, with the principal part $q$. Assume that the condition (\ref{eq1.5}) holds. Arguing as in~\cite[Section 4]{CHiSj}, we then see that when equipped with the natural domain
\begeq
\label{eq3.1}
{\cal D}({\rm Top}(e^Q)) = \left\{u\in H_{\Phi_0}(\comp^n); e^{Q} u \in L^2(\comp^n, e^{-2\Phi_0}L(dx))\right\},
\endeq
the Toeplitz operator
\begeq
\label{eq3.2}
{\rm Top}(e^{Q}) = \Pi_{\Phi_0} \circ e^{Q} \circ \Pi_{\Phi_0}: H_{\Phi_0}(\comp^n) \rightarrow H_{\Phi_0}(\comp^n)
\endeq
is densely defined.

\medskip
\noindent
Recalling the integral representation for the orthogonal projection $\Pi_{\Phi_0}$ and following~\cite{CHiSj}, we may write for $u\in {\cal D}({\rm Top}(e^Q))$,
\begeq
\label{eq3.3}
{\rm Top}(e^{Q})u(x) = C \int\!\!\!\int_{\Gamma} e^{2(\Psi_0(x,\theta) - \Psi_0(y,\theta)) + Q(y,\theta)} u(y)\, dy\, d\theta,
\endeq
where $\Psi_0$ is the polarization of $\Phi_0$ and $\Gamma$ is the contour in $\comp^{2n}$, given by $\theta = \overline{y}$. Using the assumption (\ref{eq1.6}), we conclude as in~\cite{CHiSj} that the operator ${\rm Top}(e^Q)$ can be viewed as a metaplectic Fourier integral operator associated to a complex affine canonical transformation: $\comp^{2n} \rightarrow \comp^{2n}$.

\medskip
\noindent
It is now easy to complete the proof of Theorem \ref{theo_main}. Let us write, following~\cite{Sj96},~\cite{CHiSj},
\begeq
\label{eq3.4}
{\rm Top}(e^Q) = a^w(x,D_x),
\endeq
where $a\in C^{\infty}(\Lambda_{\Phi_0})$ is the Weyl symbol of the Toeplitz operator ${\rm Top}(e^{Q})$, given by
\begeq
\label{eq3.5}
a\left(x,\xi\right)  = \left(\exp\left(\frac{1}{4} \left(\Phi''_{0,x\overline{x}}\right)^{-1} \partial_x \cdot \partial_{\overline{x}}\right)e^Q\right)(x), \quad (x,\xi) \in \Lambda_{\Phi_0}.
\endeq
In~\cite{CHiSj}, we have seen that
\begeq
\label{eq3.6}
a(x,\xi) = C_{\Phi_0} \int_{{\bf C}^n} \exp(-4 \Phi_{{\rm herm}}(x-y)) e^{Q(y)}\, L(dy),\quad C_{\Phi_0}\neq 0,
\endeq
where the integral converges thanks to (\ref{eq1.5}). An application of the method of exact stationary phase allows us therefore to conclude that
\begeq
\label{eq3.7}
a(x,\xi) = C \exp(i\left(F(x,\xi) + \ell(x,\xi)\right)), \quad (x,\xi) \in \Lambda_{\Phi_0},
\endeq
for some $C\neq 0$, where $F$ is a holomorphic quadratic form on $\comp^{2n}$ and $\ell$ is a complex linear function on $\comp^{2n}$. Theorem \ref{theo_main} follows therefore from Theorem \ref{theo_sect2}.

\section{Example: boundedness of a metaplectic Toep\-litz operator}
\label{sect_four}
\setcounter{equation}{0}
In the beginning of this section we shall illustrate Theorem \ref{theo_main} by applying it in the case when
\begeq
\label{eq4.1}
\Phi_0(x) = \frac{\abs{x}^2}{4},
\endeq
and
\begeq
\label{eq4.2}
Q(x) = \lambda \abs{x}^2 + \frac{1}{2} \overline{c} \cdot x - \frac{1}{2} d \cdot \overline{x}.
\endeq
Here $c,d \in \comp^n$ and $\lambda \in \comp$ satisfies ${\rm Re}\, \lambda < 1/4$, so that the conditions (\ref{eq1.5}), (\ref{eq1.6}) are satisfied. It follows from (\ref{eq3.5}) that the Weyl symbol $a$ of the operator ${\rm Top}(e^Q)$ is given by
\begeq
\label{eq4.3}
a\left(x,\frac{2}{i} \frac{\partial \Phi_0}{\partial x}(x)\right) = \left(\exp\left(\frac{1}{4} \Delta\right) e^Q\right)(x) = \frac{1}{\pi^n} \int_{{\bf C}^n} e^{-\abs{x-y}^2} e^{Q(y)}\, L(dy).
\endeq
Here $\Delta$ is the Laplacian on $\comp^n \simeq \real^{2n}$. The Gaussian integral in (\ref{eq4.3}) can be computed by the exact version of stationary phase and we get, after a straightforward computation,
\begeq
\label{eq4.4}
a\left(x,\frac{2}{i} \frac{\partial \Phi_0}{\partial x}(x)\right) = C\, \exp \left(\frac{1}{1-\lambda} \left(\lambda \abs{x}^2 + \frac{1}{2} \overline{c}\cdot x - \frac{1}{2} d \cdot \overline{x}\right)\right).
\endeq
Here $C\neq 0$ is a suitable constant depending on $\lambda$, $c$, $d$ only.

\medskip
\noindent
Using (\ref{eq4.4}), we may determine the explicit necessary and sufficient conditions for the boundedness of $a$ along $\Lambda_{\Phi_0}$. When doing so, it is convenient to introduce the parameter
\begeq
\label{eq4.5}
\gamma = \frac{1}{1-2\lambda},
\endeq
and to observe that
\begeq
\label{eq4.6}
{\rm Re}\, \left(\frac{\lambda}{1 - \lambda}\right) = \frac{1}{4 \abs{1-\lambda}^2} \left(1 - \frac{1}{\abs{\gamma}^2}\right).
\endeq
It follows, in particular, that if $\abs{\gamma} < 1$, then $a\in L^{\infty}(\Lambda_{\Phi_0})$ for all $c,d\in \comp^n$, and if $\abs{\gamma} > 1$, then $a$ is unbounded for all $c,d\in \comp^n$. In the ``boundary'' case when $\abs{\gamma} = 1$, we have $a\in L^{\infty}(\Lambda_{\Phi_0})$ precisely when
\begeq
\label{eq4.7}
{\rm Re}\, \left( \frac{\overline{c}}{1-\lambda} \cdot x - \frac{d}{1-\lambda}\cdot \overline{x}\right) = 0,\quad x\in \comp^n.
\endeq
Rewriting the condition (\ref{eq4.7}) in the form
$$
\left(\frac{\overline{c}}{1-\lambda} - \frac{\overline{d}}{\overline{(1-\lambda)}}\right)\cdot x + \left(\frac{c}{\overline{(1-\lambda)}}- \frac{d}{1-\lambda}\right)\cdot \overline{x} = 0, \quad x\in \comp^n,
$$
we conclude that $a\in L^{\infty}(\Lambda_{\Phi_0})$ precisely when
$$
c = \frac{\overline{1-\lambda}}{1-\lambda}\, d \Longleftrightarrow c = \gamma d.
$$

\medskip
\noindent
An application of Theorem \ref{theo_main} gives the following result.

\begin{prop}
\label{prop_example}
Let $\Phi_0(x) = \abs{x}^2/4$ and
$$
Q(x) = \lambda \abs{x}^2 + \frac{1}{2} \overline{c} \cdot x - \frac{1}{2} d \cdot \overline{x},
$$
with $c,d \in \comp^n$ and $\lambda \in \comp$, ${\rm Re}\, \lambda < 1/4$. Let us define $\gamma \in \comp$ as in {\rm (\ref{eq4.5})}. If
$\abs{\gamma} < 1$ then the operator
$$
{\rm Top}(e^Q): H_{\Phi_0}(\comp^n) \rightarrow H_{\Phi_0}(\comp^n)
$$
is bounded, for all $c,d \in \comp^n$. The same conclusion holds if $\abs{\gamma} = 1$ and $c = \gamma d$.
\end{prop}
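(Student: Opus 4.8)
The plan is to verify that the hypotheses of Theorem \ref{theo_main} are satisfied and then to read off the boundedness of the Weyl symbol $a$ directly from the explicit formula (\ref{eq4.4}) together with the identity (\ref{eq4.6}), treating the two cases $\abs{\gamma}<1$ and ($\abs{\gamma}=1$, $c=\gamma d$) separately.

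First I would record that the conditions (\ref{eq1.5}) and (\ref{eq1.6}) hold. With $\Phi_0(x)=\abs{x}^2/4$ one has $\Phi_{\rm herm}(x)=\frac12(\Phi_0(x)+\Phi_0(ix))=\abs{x}^2/4$, and since the principal part of $Q$ is $q(x)=\lambda\abs{x}^2$, the inequality (\ref{eq1.5}) reduces for $x\neq0$ to $(\Re\lambda)\abs{x}^2<\abs{x}^2/4$, that is, to the standing assumption $\Re\lambda<1/4$. Likewise $2\Phi_0-q=(1/2-\lambda)\abs{x}^2$, so that $\partial_x \partial_{\overline{x}}(2\Phi_0-q)=(1/2-\lambda)I$ and (\ref{eq1.6}) amounts to $\lambda\neq1/2$, which is automatic since $\Re\lambda<1/4$. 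Hence Theorem \ref{theo_main} applies, and it suffices to show $a\in L^\infty(\Lambda_{\Phi_0})$ in each of the two cases.

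Next I would exploit the explicit Weyl symbol (\ref{eq4.4}), whose modulus along $\Lambda_{\Phi_0}$ is controlled by the real part of the exponent; the quadratic part of that real part equals $\Re(\lambda/(1-\lambda))\abs{x}^2$. When $\abs{\gamma}<1$, the identity (\ref{eq4.6}) shows this coefficient is strictly negative, so the negative-definite quadratic term dominates the linear contribution of $c,d$ as $\abs{x}\to\infty$ and $a$ is bounded for every $c,d\in\comp^n$. When $\abs{\gamma}=1$, the same identity gives $\Re(\lambda/(1-\lambda))=0$, so the real part of the exponent is a real-linear function of $(x,\overline{x})$; such a function is bounded above only if it vanishes identically, which is precisely the condition (\ref{eq4.7}), equivalent to $c=\gamma d$ as computed in the text. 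In both cases $a\in L^\infty(\Lambda_{\Phi_0})$, and Theorem \ref{theo_main} then yields the boundedness of ${\rm Top}(e^Q)$.

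I do not anticipate a genuine obstacle, because the substantive implication---from a bounded Weyl symbol to a bounded Toeplitz operator---is exactly the content of Theorem \ref{theo_main}, already established. The one point requiring care is the borderline case $\abs{\gamma}=1$: there the quadratic part of the exponent has vanishing real part, so boundedness is governed entirely by the linear terms, and the explicit rewriting of (\ref{eq4.7}) that produces the condition $c=\gamma d$ is essential.
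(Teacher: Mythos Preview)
Your proposal is correct and follows essentially the same route as the paper: verify that $\Re\lambda<1/4$ gives (\ref{eq1.5}) and (\ref{eq1.6}), use the explicit Weyl symbol (\ref{eq4.4}) together with (\ref{eq4.6}) to decide when $a\in L^\infty(\Lambda_{\Phi_0})$, and then invoke Theorem~\ref{theo_main}. The only difference is that you spell out the verification of (\ref{eq1.5}), (\ref{eq1.6}) in more detail than the paper does.
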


\medskip
\noindent
We shall finish this section by demonstrating that, in the special case at hand, the condition $a\in L^{\infty}(\Lambda_{\Phi_0})$ is in fact also necessary for the boundedness of the Toeplitz operator ${\rm Top}(e^Q)$. When doing so, we shall study the action of ${\rm Top}(e^Q)$ on the normalized reproducing kernels for the Bargmann space $H_{\Phi_0}(\comp^n)$. To this end, let us first recall from~\cite{Sj96} that the orthogonal projection
$\Pi_{\Phi_0}: L^2(\comp^n, e^{-2\Phi_0}\, L(dx)) \rightarrow H_{\Phi_0}(\comp^n)$ is given by
\begeq
\label{eq4.8}
\Pi_{\Phi_0} u(x) = a_{\Phi_0} \int e^{2\Psi_0(x,\overline{y})} u(y) e^{-2\Phi_0(y)}\, L(dy), \quad a_{\Phi_0} > 0.
\endeq
Here
\begeq
\label{eq4.9}
\Psi_0(x,y) = \frac{1}{4} x \cdot y, \quad x,y\in \comp^n,
\endeq
is the polarization of $\Phi_0$. We have
\begeq
\label{eq4.10}
2{\rm Re}\, \Psi_0(x,\overline{y}) - \Phi_0(x) - \Phi_0(y) = -\Phi''_{0,\overline{x}x}(x-y)\cdot (\overline{x-y}) = -\frac{1}{4}\abs{x-y}^2.
\endeq

\medskip
\noindent
Let us set
\begeq
\label{eq4.11}
k_w(x) = (2\pi)^{-n/2} e^{2\Psi_0(x,\overline{w}) - \Phi_0(w)},\quad w\in \comp^n.
\endeq
Using (\ref{eq4.10}) we see that $k_w \in H_{\Phi_0}(\comp^n)$ with
\begeq
\label{eq4.12}
\norm{k_w}^2_{H_{\Phi_0}({\bf C}^n)} = \int \abs{k_w(x)}^2 e^{-2\Phi_0(x)}\, L(dx) = 1,\quad w\in \comp^n.
\endeq
We shall now consider the operator ${\rm Top}(e^Q)$ acting on $k_w$. To this end, it will be convenient to start by making the following observations. First, letting $q(x) = \lambda \abs{x}^2$ be the principal part of $Q$ in (\ref{eq4.2}), we obtain, in view of (\ref{eq4.8}) and the exact stationary phase,
\begeq
\label{eq4.13}
\left({\rm Top}(e^q) e^{2\Psi_0(\cdot, \overline{w})}\right)(x) = C_{\lambda} e^{2\Psi_0(x,\gamma \overline{w})}, \quad w\in \comp^n,
\endeq
where $C_{\lambda}$ is a constant depending on $\lambda$ only, and the parameter $\gamma$ has been defined in (\ref{eq4.5}). Next, let $h$ be entire holomorphic such that $\overline{h} e^{2\Psi_0(\cdot, \overline{w})} \in L^2(\comp^n,e^{-2\Phi_0}L(dx))$ for all $w\in \comp^n$. We then have
\begeq
\label{eq4.14}
\left({\rm Top}(\overline{h})e^{2\Psi_0(\cdot, \overline{w})}\right)(x) = \overline{h(w)} e^{2\Psi_0(x,\overline{w})}.
\endeq
Indeed, it suffices to observe that in view of (\ref{eq4.8}), the left hand side of (\ref{eq4.14}) is equal to
$$
\overline{\left({\rm Top}(h)e^{2\Psi_0(\cdot, \overline{x})}\right)(w)}.
$$
Finally, let $h$ be entire holomorphic  such that
$$
\overline{h} e^{2\Psi_0(\cdot,\overline{w})}, \overline{h} e^q e^{2\Psi_0(\cdot,\overline{w})} \in L^2(\comp^n,e^{-2\Phi_0}L(dx)),
$$
for all $w\in \comp^n$. Directly from the definitions we then see that
\begeq
\label{eq4.15}
{\rm Top}(\overline{h} e^q) = {\rm Top}(\overline{h}) {\rm Top}(e^q),
\endeq
when acting on the linear span of $\{e^{2\Psi_0(\cdot,\overline{w})},\, w\in \comp^n\} \subset H_{\Phi_0}(\comp^n)$.

\bigskip
\noindent
Using (\ref{eq4.2}), (\ref{eq4.11}), (\ref{eq4.13}), (\ref{eq4.14}), and (\ref{eq4.15}), we get
\begeq
\label{eq4.16}
\left({\rm Top}(e^Q) k_w\right)(x) = C_{\lambda} \exp(2\Psi_0(x, \gamma(\overline{w} + \overline{c})) - 2\Psi_0(d, \gamma(\overline{w} + \overline{c})) - \Phi_0(w)).
\endeq
Here, as above, $C_{\lambda} \neq 0$ is a constant which depends on $\lambda$ only. Taking the norm in $H_{\Phi_0}(\comp^n)$ and using (\ref{eq4.10}), we obtain
\begeq
\label{eq4.17}
\norm{{\rm Top}(e^Q) k_w}_{H_{\Phi_0}({\bf C}^n)} = C\, \exp(\Phi_0(\overline{\gamma}(w+c)) - \Phi_0(w) - 2{\rm Re}\, \Psi_0(d, \gamma \overline{w})).
\endeq
Here $C \neq 0$ is a constant depending on $\lambda$, $c,d$ only. It follows from (\ref{eq4.17}) that if $\abs{\gamma} > 1$, the operator ${\rm Top}(e^Q)$ is unbounded for all $c,d\in \comp^n$. If $\abs{\gamma} = 1$, we get with a new constant,
\begeq
\label{eq4.18}
\norm{{\rm Top}(e^Q) k_w}_{H_{\Phi_0}({\bf C}^n)} = C\, \exp(2{\rm Re}\, \Psi_0(w,\overline{c}) - 2{\rm Re}\, \Psi_0(w,\overline{\gamma}\overline{d})),
\endeq
and it follows that if $c \neq \gamma d$, the operator ${\rm Top}(e^Q)$ is unbounded.

\medskip
\noindent
The discussion above may be summarized in the following theorem.

\begin{theo}
Let $\Phi_0(x) = \abs{x}^2/4$ and $Q(x) = \lambda \abs{x}^2 + \frac{1}{2} \overline{c} \cdot x - \frac{1}{2} d \cdot \overline{x}$,
with $c,d \in \comp^n$ and $\lambda \in \comp$, ${\rm Re}\, \lambda < 1/4$. The Toeplitz operator
$$
{\rm Top}(e^Q): H_{\Phi_0}(\comp^n) \rightarrow H_{\Phi_0}(\comp^n)
$$
is bounded if and only if the Weyl symbol $a\in C^{\infty}(\Lambda_{\Phi_0})$ of ${\rm Top}(e^Q)$ satisfies $a\in L^{\infty}(\Lambda_{\Phi_0})$.
\end{theo}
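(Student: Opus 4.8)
The plan is to establish the two implications separately, leaning on the material already assembled in this section. The sufficiency direction, namely that boundedness of the Weyl symbol $a$ forces boundedness of ${\rm Top}(e^Q)$, is immediate: it is exactly the content of Theorem \ref{theo_main} specialized to the present $\Phi_0$ and $Q$, and has already been recorded as Proposition \ref{prop_example}. Using the explicit formula (\ref{eq4.4}) for $a$ together with the identity (\ref{eq4.6}), one checks that $a\in L^{\infty}(\Lambda_{\Phi_0})$ precisely when $\abs{\gamma}<1$, or when $\abs{\gamma}=1$ and $c=\gamma d$, and in each of these cases Theorem \ref{theo_main} yields the boundedness of the operator.

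For the necessity direction I would prove the contrapositive: if $a\notin L^{\infty}(\Lambda_{\Phi_0})$, then ${\rm Top}(e^Q)$ is unbounded. By the computation following (\ref{eq4.4}), the symbol fails to be bounded exactly when $\abs{\gamma}>1$, or when $\abs{\gamma}=1$ and $c\neq\gamma d$. To exhibit unboundedness in these cases, the natural test functions are the normalized reproducing kernels $k_w$ introduced in (\ref{eq4.11}), which satisfy $\norm{k_w}_{H_{\Phi_0}({\bf C}^n)}=1$ by (\ref{eq4.12}). Since the operator norm of ${\rm Top}(e^Q)$ dominates $\norm{{\rm Top}(e^Q)k_w}_{H_{\Phi_0}({\bf C}^n)}$ for every $w$, it suffices to show that this family of norms is unbounded as $w$ ranges over $\comp^n$.

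The key computational input is the closed-form expression (\ref{eq4.16}) for ${\rm Top}(e^Q)k_w$. I would obtain it in the three steps already prepared above: first apply exact stationary phase to compute the action of ${\rm Top}(e^q)$ on the coherent-state exponentials $e^{2\Psi_0(\cdot,\overline{w})}$, producing the dilation $\overline{w}\mapsto\gamma\overline{w}$ of (\ref{eq4.13}); then use the factorization (\ref{eq4.15}) to split off the antiholomorphic linear factor and evaluate it through the reproducing identity (\ref{eq4.14}); and finally incorporate the remaining holomorphic linear factor coming from the term $-\frac{1}{2}d\cdot\overline{x}$ in $Q$. Taking the $H_{\Phi_0}$-norm of (\ref{eq4.16}) and simplifying with (\ref{eq4.10}) then yields the explicit formula (\ref{eq4.17}).

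The heart of the matter, and the step I expect to require the most care, is reading off the growth of (\ref{eq4.17}). When $\abs{\gamma}>1$ the quadratic exponent $\Phi_0(\overline{\gamma}(w+c))-\Phi_0(w)$ behaves like a positive-definite quadratic form in $w$ and tends to $+\infty$, so the norms blow up and the operator is unbounded regardless of $c,d$. The delicate situation is the boundary case $\abs{\gamma}=1$: here the quadratic contributions cancel and (\ref{eq4.17}) collapses to the purely linear exponent (\ref{eq4.18}). Unboundedness then hinges on whether the surviving linear form $2\,\Re\Psi_0(w,\overline{c})-2\,\Re\Psi_0(w,\overline{\gamma}\,\overline{d})$ is not identically zero, and the task is to verify that this happens precisely when $c\neq\gamma d$. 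Confirming that this operator-side dichotomy matches exactly the symbol-side criterion for $a\in L^{\infty}(\Lambda_{\Phi_0})$ closes the argument and establishes the equivalence.
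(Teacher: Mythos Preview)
Your proposal is correct and follows essentially the same route as the paper: sufficiency via Proposition~\ref{prop_example} (i.e., Theorem~\ref{theo_main}), necessity by testing on the normalized reproducing kernels $k_w$ and reading off the growth of (\ref{eq4.17}) and (\ref{eq4.18}). One small slip to fix: the holomorphic linear factor in $e^Q$ comes from $\tfrac{1}{2}\overline{c}\cdot x$ (it shifts $\overline{w}\mapsto\overline{w}+\overline{c}$ before applying (\ref{eq4.13})), while the antiholomorphic factor $\overline{h}$ handled via (\ref{eq4.14})--(\ref{eq4.15}) is the one coming from $-\tfrac{1}{2}d\cdot\overline{x}$; you have these roles reversed in your description.
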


\end{document}